\documentclass[12pt,reqno]{amsart}
\usepackage{fullpage}
\usepackage{times}
\usepackage{colonequals}
\usepackage{amsmath,amssymb,amsthm,url}
\usepackage{xcolor}
\usepackage[utf8]{inputenc}
\usepackage[english]{babel}
\usepackage{comment}
\usepackage{bbm}
\usepackage{enumerate}
\usepackage{bm}
\usepackage{graphicx}
\usepackage{mathrsfs}
\usepackage[colorlinks=true, pdfstartview=FitH, linkcolor=blue, citecolor=blue, urlcolor=blue]{hyperref}

\newtheorem{thm}{Theorem}[section]
\newtheorem{lem}[thm]{Lemma}

\theoremstyle{definition}

\newtheorem{defn}[thm]{Definition}
\newtheorem{example}[thm]{Example}
\newtheorem{rem}[thm]{Remark}

\newcommand{\Q}{\mathbb{Q}}
\newcommand{\R}{\mathbb{R}} 
\newcommand{\N}{\mathbb{N}}
\newcommand{\Z}{\mathbb{Z}}

\title{On the fractional parts of certain sequences of $\xi \alpha^{n}$}
\author{Xiang Gao}
\address{Department of Mathematics, Hubei Key Laboratory of Applied Mathematics, Hubei University, Wuhan 430062, China}
\email{gaojiaou@gmail.com}
\author{Chi Hoi Yip}
\address{School of Mathematics\\ Georgia Institute of Technology\\ GA 30332\\ United States}
\email{cyip30@gatech.edu}
\subjclass[2020]{Primary: 11J71, 28A80. Secondary: 11K16, 37A44, 42A38}
\keywords{Fractional part, algebraic numbers, Diophantine approximation, Fourier decay of self-similar measures}

\begin{document}

\begin{abstract}
Assume that $\alpha>1$ is an algebraic number and $\xi\neq0$ is a real number. We are concerned with the distribution of the fractional parts of the sequence $(\xi \alpha^{n})$. Under various Diophantine conditions on $\xi$ and $\alpha$, we obtain lower bounds on the number of occurrences for which the fractional part of the sequence $(\xi \alpha^{n})_{n\geq1}$ falls into a prescribed region $I\subset [0,1]$, extending several results in the literature. As an application, we show that the Fourier decay rate of some self-similar measures is logarithmic, generalizing a recent result of Varj\'{u} and Yu.  
\end{abstract}

\maketitle

\section{Introduction}
A real number $\xi\in [0,1]$ is said to be normal to base $\alpha\in\mathbb{N},\alpha> 1$, if in the base $\alpha$ expansion of $\xi=0.\xi_1\xi_2\cdots \xi_n \cdots$, every combination of $k$ digits occurs with the proper frequency for each $k\in\mathbb{N}$. It is well-known that this is equivalent to the statement that the sequence $(\xi \alpha^n )_{n\geq1}$ is uniformly distributed modulo one.  Namely,
$$\lim\limits_{N\rightarrow\infty}\frac{1}{N}\#\{1\leq n \leq N: \xi \alpha^{n}\!\!\!\!\pmod 1\in I\}=|I|,$$
for any subinterval $I$ of $[0,1]$, here we denote the length of $I$ by $|I|$. For a general real number $\alpha>1$, there are two classical metric results as follows. For any fixed $\alpha>1$, Koksma \cite{K35} proved that the sequence $(\xi\alpha^n)_{n\geq1}$ is uniformly distributed modulo one for almost all real numbers $\xi$. On the other hand, Weyl proved that for any fixed $\xi \neq 0$, the sequence $(\xi\alpha^n)_{n\geq1}$ is uniformly distributed modulo one for almost all real numbers $\alpha>1$. The reader may refer to \cite{B12} for more background. 

However, for a specific pair $(\xi,\alpha)$, the distribution of $(\xi \alpha^{n})_{n\geq1}$ modulo one is very far from being understood, except in very few cases. For instance, we do not know whether the sequence $((\frac{3}{2})^n)_{n\geq1}$ is uniformly distributed modulo one. Indeed, it is not even known to be dense. Vijayaraghavan \cite{V40} in $1940$ first showed that there are infinitely many limit points of the sequence $((\frac{p}{q})^n)_{n\geq1}$, where $ p,q $ are relatively prime integers with $p >q \geq 2$. In $1968$, Mahler \cite{M68} conjectured that there does not exist a nonzero real number $\xi$ such that
\[
\Bigl\{\xi\Bigl(\frac32\Bigr)^n\Bigr\}<\frac12
\]
for all positive integers $n$. This is known as Mahler's $3/2$ problem.
In $1995$, Flatto, Lagarias, and Pollington \cite{FLP95} showed that the gap between the largest and the smallest limit point is at least $\frac{1}{p}$. Similar results were proved by Pisot \cite{P38} for $(\xi \alpha^n)_{n \geq 1}$ when $\alpha>1$ is a special algebraic number, and some strengthened results were obtained by Dubickas \cite{D060, D061} for all algebraic numbers $\alpha>1$. 

In this paper, we shall estimate the number of $1\leq n \leq N$ such that  $\xi\alpha^n$ modulo one falls into a prescribed region $I\subseteq [0,1]$. This counting problem can be viewed as a quantitative version of the above-mentioned results. Moreover, this combinatorial problem is closely related to the behaviors of digital expansions of numbers. For example, when $\alpha=3$, the number of terms that the sequence $(\xi\alpha^n)_{n\geq1}$ fall into the region $I=[\frac{1}{3},\frac{2}{3}]$ is exactly equal to be the number of digits $1$ in the base-$3$ expansion of $\xi$.

There is currently much interest in the theory of Fourier transforms of self-similar measures, see the definition in Section~\ref{sec:app}. Denote the reciprocal of the contraction ratio of self-similar measure by $\alpha$, we know that the Fourier analytic properties of self-similar measures have close connections with the distribution of fractional part of the sequence \!$(\xi\alpha^n)_{n\geq1}$, see for example \cite{DFW07, E39, S63}; for more recent results, see \cite{BS14, R21, VY20}. We also refer to a recent survey by Sahlsten~\cite{S25}.

In this paper, we prove some results on the distribution of fractional parts of the sequence $(\xi \alpha^{n})$ under various Diophantine conditions on $\xi$ and $\alpha$. We then apply our results to study the Fourier decay of self-similar measures in Section~\ref{sec:app}. In particular, we employ these new results to obtain a generalization of a recent result of Varj\'{u} and Yu~\cite{VY20}; see Theorem~\ref{thm-5} for a precise statement.

Throughout the paper, we will use the capital letter $C(\alpha)$ or $C(\xi,\alpha)$ to denote a positive constant whose exact value may vary at each occurrence. We denote by $\|x\|$ the distance from a real number $x$ to its nearest integer.

\begin{defn}[Base-$b$ Diophantine exponent]\label{defn:base-exponent}
Let \(b\geq 2\) be an integer.  We denote by \(v_b(\xi)\) the supremum of the set of all real numbers \(v\) for which the inequality
\[
        \|b^n\xi\|<(b^n)^{-v}
\]
holds for infinitely many positive integers \(n\).
\end{defn}
The exponent \(v_b(\xi)\) was introduced by Amou and Bugeaud~\cite{AB10} in their study of Diophantine approximation and expansions in integer bases.

Our first result is the following. 
\begin{thm}\label{thm-1}
Let $\xi\neq 0$ be a real number, and $\alpha=\frac{p}{q}>1$ be a rational number with $\gcd(p,q)=1$ and $q\geq 1$. Assume that $\eta$ is a real number such that $(p-q)|\eta|<1$. Additionally, if $q=1$ (that is, $\alpha=p$ is an integer), suppose that \(v_p(\xi)<\infty\). Then there exists $\delta_1\in(0,\frac{1}{2})$ that depends only on $\alpha$ and $\eta$, and a positive constant $C=C(\alpha,\xi)$,
such that
\begin{equation}\label{eq:thm-1}
\#\left\{1\leq n\leq N: ~\|\xi \alpha^{n}-\eta\|\geq\delta_1\right\}\geq \lfloor C\log N  \rfloor  
\end{equation}
holds for all positive integers $N$.    
\end{thm}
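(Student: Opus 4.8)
The plan is to show that the set of ``bad'' times $n$ --- those with $\|\xi\alpha^{n}-\eta\|<\delta_1$ --- cannot cluster too densely: a block of consecutive bad times beginning at position $m+1$ must have length $O(m)$, with an implied constant depending only on $\alpha$ (and, when $\alpha$ is an integer, on the irrationality exponent of $\xi$). Such a sublinear bound on block lengths forces the ``good'' times up to $N$ to number at least $C\log N$. To set this up, I would fix $\delta_1$ small (to be pinned down in terms of $\alpha$ and $\eta$ only), call $n$ bad if $\|\xi\alpha^{n}-\eta\|<\delta_1$ and good otherwise, and for bad $n$ write $\xi\alpha^{n}=a_n+\eta+\epsilon_n$ with $a_n\in\Z$ and $|\epsilon_n|<\delta_1$. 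If $n$ and $n+1$ are both bad, then substituting into $\xi\alpha^{n+1}=\alpha\,\xi\alpha^{n}$ and clearing the denominator $q$ gives the integer relation
\[
q\,a_{n+1}-p\,a_n=(p-q)\eta+p\epsilon_n-q\epsilon_{n+1};
\]
writing $\mu_n$ for the right-hand side, $\mu_n\in\Z$ and $|\mu_n-(p-q)\eta|\le(p+q)\delta_1$. Since $(p-q)|\eta|<1$, there are two regimes. In the \emph{generic} regime $(p-q)\eta\notin\tfrac12+\Z$: choosing $\delta_1$ so that $(p+q)\delta_1$ is smaller than the distance from $(p-q)\eta$ to $\tfrac12+\Z$ forces $\mu_n$ to be the nearest integer $\mu$ to $(p-q)\eta$ for \emph{every} bad $n$, and $|\mu|\le 1$. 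The \emph{boundary} regime $(p-q)\eta\in\tfrac12+\Z$ I would treat separately, and it is easier.

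In the generic regime, let $\{m+1,\dots,m+L\}$ be a block of bad times, so $q\,a_{n+1}=p\,a_n+\mu$ for $m+1\le n\le m+L-1$. If $q\ge 2$, put $b_n:=(p-q)a_n+\mu$; the recursion telescopes to $q\,b_{n+1}=p\,b_n$, hence $q^{L-1}b_{m+L}=p^{L-1}b_{m+1}$, and $\gcd(p,q)=1$ yields $q^{L-1}\mid b_{m+1}$. Either $b_{m+1}=0$, which makes $a_n$ constant on the block and so forces $|\xi|(\alpha-1)\alpha^{m+1}<2\delta_1$ --- impossible for $m\ge m_0(\alpha,\xi)$ --- or else $q^{L-1}\le|b_{m+1}|\le(p-q)|a_{m+1}|+1\le C(\alpha,\xi)\,\alpha^{m+1}$, giving $L\le(m+1)\log_q\alpha+C(\alpha,\xi)$. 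If instead $q=1$ (so $\alpha=p\ge 2$), the recursion $a_{n+1}=p\,a_n+\mu$ integrates to $a_n=p^{\,n-m-1}\!\bigl(a_{m+1}+\tfrac{\mu}{p-1}\bigr)-\tfrac{\mu}{p-1}$; plugging this into $p^{m+L}\xi=a_{m+L}+\eta+\epsilon_{m+L}$ and using $p^{m+1}\xi=a_{m+1}+\eta+\epsilon_{m+1}$ produces, with $B:=p^{m+1}$ and $A:=(p-1)a_{m+1}+\mu\in\Z$,
\[
\Bigl|(p-1)\xi-\frac{A}{B}\Bigr|=\frac{\bigl|(p-1)\eta-\mu+(p-1)\epsilon_{m+L}\bigr|}{p^{m+L}}<\frac{1}{B\,p^{L-1}},
\]
since $|(p-1)\eta-\mu|\le\tfrac12$ and $(p-1)\delta_1<\tfrac12$. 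Because $\kappa\bigl((p-1)\xi\bigr)=\kappa(\xi)<\infty$, on fixing $\kappa'>\kappa(\xi)$ the inequality $0<|(p-1)\xi-A/B|<B^{-\kappa'}$ has only finitely many integer solutions, hence only finitely many $B$; but $L-1\ge(\kappa'-1)(m+1)$ would force $1/(B\,p^{L-1})\le B^{-\kappa'}$, exhibiting such a solution with $B=p^{m+1}$, so $L\le(\kappa'-1)(m+1)+C(\alpha,\xi)$ once $m$ is large. This --- and only this --- step uses the hypotheses that $\xi$ is irrational with $\kappa(\xi)<\infty$.

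For the boundary regime $(p-q)\eta\in\tfrac12+\Z$, I would argue directly: if $n$ is bad then, modulo $1$, $\xi\alpha^{n+1}-\eta\equiv\tfrac{r+(p-q)\eta}{q}+\tfrac{p}{q}\epsilon_n$ with $r:=p\,a_n\bmod q$, and since $(p-q)\eta$ is a half-integer $\bigl\|\tfrac{r+(p-q)\eta}{q}\bigr\|\ge\tfrac1{2q}$, whence $\|\xi\alpha^{n+1}-\eta\|\ge\tfrac1{2q}-\tfrac{p}{q}\delta_1>\delta_1$ for $\delta_1$ small in terms of $\alpha$; so bad times are isolated and already $\ge N/2-1$ times are good. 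Assembling: in all cases there are $\lambda=\lambda(\alpha,\xi)$ and $m_*=m_*(\alpha,\xi)$ so that every block of bad times starting at position $m+1\ge m_*+1$ has length $\le\lambda(m+1)$. Listing the good times $n_1<n_2<\cdots$ (infinitely many, else a tail would be one infinite bad block), consecutive good times bracket a bad block, so $n_{i+1}\le(\lambda+2)n_i$ for $i$ large, and the same reasoning bounds $n_1$; hence the $n_i$ grow at most geometrically, and for $N$ large
\[
\#\{\,1\le n\le N:\ \|\xi\alpha^{n}-\eta\|\ge\delta_1\,\}=\max\{i:n_i\le N\}\ \ge\ \frac{\log N}{2\log(\lambda+2)}=C(\alpha,\xi)\log N .
\]
The crux, which I expect to be the main obstacle, is the run-length estimate in the generic regime --- turning ``$\xi\alpha^{n}$ stays within $\delta_1$ of $\eta$ for $L$ consecutive $n$'' into an arithmetic contradiction when $L$ is large relative to $m$: for non-integer rational $\alpha$ this is a transparent divisibility obstruction supplied by $q$, whereas for integer $\alpha$ one must instead manufacture an anomalously sharp rational approximation to $\xi$, which is exactly why a Diophantine hypothesis on $\xi$ is needed only in that case.
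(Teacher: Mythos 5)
Your argument is correct and follows the same overall strategy as the paper: bound the length of any run of consecutive ``bad'' times linearly in the starting index, then use a geometric-growth count to extract a $C\log N$ lower bound (the paper isolates this last step as Lemma~\ref{lem-3}). The point of divergence is how you handle the relation $qa_{n+1}-pa_n=(p-q)\eta+p\epsilon_n-q\epsilon_{n+1}$. The paper takes $\delta_1=\frac{1-(p-q)|\eta|}{p+q}$, so that $|qa_{n+1}-pa_n|<(p-q)|\eta|+(p+q)\delta_1=1$ by the triangle inequality, and the integer relation collapses to $qa_{n+1}=pa_n$ with no error term at all; telescoping then gives $q^kA_{n+k}=p^kA_n$ directly. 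You instead track a possibly nonzero $\mu$ (the nearest integer to $(p-q)\eta$), which forces the substitution $b_n=(p-q)a_n+\mu$ and a separate treatment of the boundary case $(p-q)\eta\in\tfrac12+\Z$. Both of your regimes work, but the hypothesis $(p-q)|\eta|<1$ is precisely what lets the paper kill $\mu$ in one line, so the case split and the substitution are unnecessary. After that your two sub-cases coincide with the paper's: for $q\ge2$, coprimality gives the $q$-adic divisibility bound $q^{L-1}\le|b_{m+1}|\le C(\alpha,\xi)\,\alpha^{m}$, and for $q=1$ you manufacture a rational approximation (in your case to $(p-1)\xi$, in the paper's directly to $\xi$) with denominator $p^{m+1}$ that is too sharp unless $L$ is linear in $m$, which is where $\kappa(\xi)<\infty$ enters; the paper notes Ridout's theorem can replace the irrationality-exponent hypothesis here. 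One small repair: in your $b_{m+1}=0$ subcase, constancy of $a_n$ over the block gives $|\xi|(\alpha-1)\alpha^{m+L-1}<2\delta_1$, which bounds $m+L$ outright, so you should phrase it that way rather than restricting to $m\ge m_0(\alpha,\xi)$ --- a long block starting at small $m$ is also excluded.
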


Note that Theorem~\ref{thm-1} is essentially best possible in general. This can be seen from the example $\alpha=2$, $\eta=0$, and $\xi=\sum_{k=1}^{\infty}\,2^{-2^k}$; here the number of exceptional indices has only logarithmic order. On the other hand, when $\alpha$ is an integer, the condition \(v_\alpha(\xi)<\infty\) cannot simply be omitted. Indeed, if $\alpha=2$, $\eta=0$, and $\xi=\sum_{k=1}^{\infty}\,2^{-k!}$, then \(v_2(\xi)=\infty\), and the left-hand side of \eqref{eq:thm-1} is $o(\log N)$ for each $\delta_1 \in (0,\frac{1}{2})$.

We shall generalize Theorem~\ref{thm-1} to the case of algebraic numbers. Throughout the paper, we use the following notations. Let $\alpha>1$ be a real algebraic number and $f(x)=a_dx^{d}+a_{d-1}x^{d-1}+\cdots+a_{1}x+a_{0} \in \Z[x]$ be its minimal polynomial. Denote by $\alpha_{1} (\!=\!\alpha),\alpha_2,\ldots,\alpha_{d}$ the conjugates of $\alpha$. Note that $f(x)$ is irreducible and all conjugates of $\alpha$ are distinct. We also use $L(\alpha)$, $M(\alpha)$, and $h(\alpha)$ to denote the length, Mahler measure, and the logarithmic height of $\alpha$, respectively. That is, 
$$ L(\alpha)=\sum_{j=0}^{d}|a_j|,\,\,\,\,  M(\alpha)=|a_d|\prod_{i=1}^{d}\max\{1,|\alpha_i|\},\,\,\, h(\alpha)=\frac{1}{d}\log(M(\alpha)).$$
We also define $\Tilde{L}(\alpha)=|\sum_{j=0}^d a_j|$. Observe that $\Tilde{L}(\alpha)\neq 0$ (otherwise $f(1)=0$, which implies $\alpha=1$).

\begin{defn}[Liouville number]
A real number $\xi$ is said to be Liouville over a number field $\mathbb{K}$, if $\xi\notin \mathbb{K}$ and for any $\tau>0$, there exists $\beta\in \mathbb{K}$ with $h(\beta)>0$, such that
\[|\xi-\beta|<\exp(-\tau h(\beta)).\]
\end{defn}

Our second theorem is the following.

\begin{thm}\label{thm-2}
Let $\alpha>1$ be a real algebraic number. Let $\eta$ be a real number such that $\Tilde{L}(\alpha)|\eta|<1$.  Suppose that $\xi\notin\Q(\alpha)$ is a real number which is not Liouville over $\Q(\alpha)$, then there exists $\delta_2\in(0,\frac{1}{2})$ that depends only on $\alpha$ and $\eta$, and a positive constant $C=C(\alpha,\xi)$, such that
\begin{equation}\label{eq:thm-2}
\#\left\{1\leq n\leq N: ~\|\xi \alpha^{n}-\eta\|\geq\delta_2\right\}\geq \lfloor C\log N  \rfloor    
\end{equation}
holds for all positive integers $N$.
\end{thm}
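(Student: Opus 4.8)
The plan is to reduce Theorem~\ref{thm-2} to the rational case treated in Theorem~\ref{thm-1} by working simultaneously with all the conjugates of $\alpha$, i.e.\ by passing from the single real number $\xi\alpha^n$ to a ``trace'' that is forced to be close to an integer whenever \emph{all} the quantities $\|\xi\alpha^n - \eta\|$ are small. Concretely, suppose toward a contradiction that for a suitable $\delta_2$ (to be chosen) the set in \eqref{eq:thm-2} has size $o(\log N)$; then for all but $o(\log N)$ values of $n\le N$ we have $\|\xi\alpha^n-\eta\|<\delta_2$. Write $\alpha = \alpha_1,\dots,\alpha_d$ for the conjugates and $f(x)=a_d x^d+\dots+a_0$ for the minimal polynomial. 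Since $\xi\notin\Q(\alpha)$ is not Liouville over $\Q(\alpha)$, there is $\tau>0$ so that $|\xi-\beta|\ge \exp(-\tau h(\beta))$ for every $\beta\in\Q(\alpha)$; equivalently, there is a lower bound (polynomial in the denominator/height) on how well $\xi$ can be approximated by elements of $\Q(\alpha)$. The key algebraic observation is that $a_d^n\alpha^n$ is an algebraic integer in $\Q(\alpha)$, and the symmetric function
\[
T_n \;=\; \sum_{i=1}^d c_i\,\alpha_i^{\,n}
\]
for appropriate coefficients $c_i$ (chosen as a $\Q$-linear combination of powers of the $\alpha_i$ realizing the trace form), multiplied by a bounded power of $a_d$, is a rational integer. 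I would choose the $c_i$ so that $c_1 = \xi$-like data; more precisely, approximate $\xi$ by a ``good'' element $\beta\in\Q(\alpha)$ and consider $\operatorname{Tr}_{\Q(\alpha)/\Q}(\beta\alpha^n)$, scaled to be an integer.

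The heart of the argument is then the following dichotomy, exactly parallel to the rational case. For the conjugates with $|\alpha_i|\le 1$, the contributions $c_i\alpha_i^n$ decay or stay bounded, so they contribute at most a bounded amount to $T_n$, controlled by $\widetilde L(\alpha)$ (this is where the hypothesis $\widetilde L(\alpha)|\eta|<1$ enters, matching the role of $(p-q)|\eta|<1$ in Theorem~\ref{thm-1}: it guarantees that the ``error terms'' plus $\eta$-shift leave a definite gap to the next integer). For $\alpha_1=\alpha$ itself, if $\|\xi\alpha^n-\eta\|<\delta_2$ then $\xi\alpha^n$ is within $\delta_2$ of an integer plus $\eta$; replacing $\xi$ by its good rational-in-$\Q(\alpha)$ approximant $\beta$ introduces an error of size $|\xi-\beta|\cdot\alpha^n$, which we control by taking $\beta$ to have height growing slowly (e.g.\ $h(\beta)\asymp n$), so that $|\xi-\beta|\alpha^n \le \exp(-\tau' n)\cdot\alpha^n$ stays small provided $\tau'$ is large — but here we do \emph{not} get to choose $\tau'$ freely, so instead one picks $\beta$ from the convergents/good approximants guaranteed by the non-Liouville condition and tracks the trade-off. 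Combining, $T_n$ is within a fixed distance $<\tfrac12$ of an integer for all but $o(\log N)$ indices $n\le N$, while $\|T_n\|$ cannot be eventually tiny because $T_n$ satisfies a linear recurrence with characteristic polynomial dividing a power of $f$, and a non-degenerate linear recurrence of integers (or near-integers) that is not eventually the integer sequence it pretends to be must have $|T_n|$ or $\|T_n\|$ bounded below infinitely often — more precisely, I would run the same counting/recurrence argument as in the proof of Theorem~\ref{thm-1} on the integer sequence obtained after clearing denominators, invoking that $\{T_n \bmod 1\}$ being trapped near $0$ for a positive density (indeed all but $o(\log N)$) of times would force $\xi$ (through $\beta$) to satisfy a Liouville-type approximation by elements of $\Q(\alpha)$, contradicting the hypothesis.

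I expect the main obstacle to be the \textbf{bookkeeping of the two competing errors}: the error from truncating $\xi$ to $\beta\in\Q(\alpha)$ (which grows like $|\xi-\beta|\alpha^n$ and must be killed by the non-Liouville lower bound on $|\xi-\beta|$ relative to $h(\beta)$) versus the error from the small conjugates (bounded, but only quantitatively if $\widetilde L(\alpha)|\eta|<1$ strictly). Getting both below the threshold needed to conclude $\|T_n\|<\tfrac12$ simultaneously — and choosing $\delta_2$ small enough but still positive, depending only on $\alpha$ and $\eta$ — is the delicate point. A secondary technical nuisance is that $\alpha$ need not be an algebraic \emph{integer}, so one must carry the factor $a_d^{\,n}$ (or $M(\alpha)^n$) through the estimates and verify it does not overwhelm the $\log N$-scale gain; this is analogous to how the prime-to-$q$ part is handled in Theorem~\ref{thm-1} and should go through with the same care. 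Once the sequence is reduced to an integer (or rational-with-bounded-denominator) linear recurrence, the final counting step is essentially a citation of the mechanism already developed for Theorem~\ref{thm-1}.
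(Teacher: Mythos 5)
Your proposal takes a genuinely different route from the paper's, but it has a gap that I don't see how to close. The paper does not pick an a priori approximant $\beta \in \Q(\alpha)$ and take traces; instead, it \emph{constructs} the approximant from the data. If the block $\|\xi\alpha^n - \eta\|, \dots, \|\xi\alpha^{n+k}-\eta\|$ all fall below $\delta_2 = (1 - |\eta|\widetilde L(\alpha))/L(\alpha)$, then the integers $A_j = \lfloor \xi\alpha^j - \eta + \tfrac12\rfloor$ satisfy the recurrence $a_d A_{j+d} + \dots + a_0 A_j = 0$ along the block. Lemma~\ref{lem-1} (Cassels) then produces coefficients $\theta_1,\dots,\theta_d$ with $A_j = \sum_i \theta_i\alpha_i^j$, and Lemma~\ref{lem-2} gives $\theta_1 \in \Q(\alpha)$ explicitly as a ratio of polynomial expressions in $\alpha$ and the integers $A_n,\dots,A_{n+d-1}$. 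Because $\theta_1$ is built from the data, one gets both a height bound $h(\theta_1) \lesssim n$ and a closeness bound $|\xi - \theta_1| \lesssim \alpha^{-(n+k)}$ essentially for free (the latter by applying the Lemma~\ref{lem-2} inversion to the \emph{last} $d$ equations of the block, where the right-hand sides are the bounded $\varepsilon$'s rather than the growing $A$'s). The non-Liouville hypothesis then forces $k \lesssim n$ directly, with no trace argument needed.

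The concrete problem with the trace approach is the treatment of conjugates. You write that ``for the conjugates with $|\alpha_i|\le 1$, the contributions $c_i\alpha_i^n$ decay or stay bounded,'' and then conclude that $T_n$ is within a fixed distance of an integer. But Theorem~\ref{thm-2} makes no assumption on the moduli of the conjugates: in general $\alpha > 1$ can have conjugates $\alpha_i$ with $|\alpha_i| > 1$, in which case the terms $\beta_i \alpha_i^n$ in $\operatorname{Tr}_{\Q(\alpha)/\Q}(\beta\alpha^n)$ grow exponentially and the trace tells you nothing about $\|\beta\alpha^n\|$. Restricting to $|\alpha_i|\le 1$ for $i\ge 2$ would confine you to (essentially) Pisot $\alpha$, which is strictly weaker than the theorem. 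Relatedly, your hypothesis $\widetilde L(\alpha)|\eta|<1$ is not really about controlling the small-conjugate tail; in the paper it bounds the contribution $|\eta\sum_j a_j| = |\eta|\widetilde L(\alpha)$ that appears when the $\eta$-shift is pushed through the recurrence $a_d A_{n+d}+\dots+a_0 A_n$. Finally, you acknowledge the tension in choosing $\beta$: to make $|\xi-\beta|\alpha^{n+k}$ small you need $h(\beta)$ large, but then your integer model of $T_n$ has a growing denominator. You do not resolve this, and I don't think it can be resolved by picking $\beta$ first; the resolution in the paper is precisely that $\theta_1$ is determined by the recurrence and so its height is automatically comparable to $n$, with no free parameter to tune.
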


Note that when $\xi\notin \Q(\alpha)$ is a real algebraic number, the Roth-Leveque theorem (see \cite[Theorem 2.5]{B04}) implies that $\xi$ is not Liouville over $\Q(\alpha)$ and thus Theorem~\ref{thm-2} applies. In particular, Theorem~\ref{thm-2} strengthens a result of Kaneko~\cite[Theorem 2]{K10}, where he proved a lower bound of the same shape for $\#\left\{1\leq n\leq N: ~\{\xi \alpha^{n}\}\geq\delta_2\right\}$ under the extra assumption that $\xi\notin \Q(\alpha)$ is a real algebraic number.

\begin{rem}
Under the extra assumption that $\alpha$ is an algebraic integer, Kaneko \cite[Theorem 1]{K10} showed that if $\xi \in \R \setminus \Q(\alpha)$, then there exists $\delta=\delta(\alpha)$, such that 
\[\#\left\{1\leq n\leq N: ~\{\xi \alpha^{n}\} \geq \delta\right\}\geq C(\alpha, \xi) \frac{(\log N)^{3/2}}{(\log \log N)^{1/2}}.\]
One main tool of his proof is a quantitative parametric subspace theorem by Bugeaud and Evertse \cite{BE08}. While his result is stronger than our bound in this setting, our proof is different and shorter.
\end{rem}

In the next example, we illustrate the necessity of the assumption $\xi\notin\Q(\alpha)$ in Theorem \ref{thm-2}.

 \begin{example}
 Consider the case $\xi=\frac{1}{\sqrt{5}}$, $\eta=0$, and $\alpha=\frac{1+\sqrt{5}}{2}$. Then clearly $\xi\in\Q(\alpha)$. It is well-known that the Fibonacci sequence $(F_n)$ has a closed-form formula
 \[F_n=\frac{1}{\sqrt{5}}\left(\frac{1+\sqrt{5}}{2}\right)^n-\frac{1}{\sqrt{5}}\left(\frac{1-\sqrt{5}}{2}\right)^n.\]
Observe that $|\frac{1-\sqrt{5}}{2}|\in (0,1)$ so that $\|\xi \alpha^{n}\|\rightarrow 0$. Thus, it follows that the right-hand side of \eqref{eq:thm-2} is finite and Theorem \ref{thm-2} fails to hold. 
 \end{example}
Motivated by the above example, we prove the following theorem.
\begin{thm}\label{thm-3}
Let $\alpha>1$ be a real algebraic number with at least one conjugate different from itself lying outside the unit circle,  and $\eta$ be a real number such that $\Tilde{L}(\alpha)|\eta|<1$.  If $\xi\neq0$, then there exists $\delta_3\in(0,\frac{1}{2})$ that depends only on $\alpha$ and $\eta$, and a positive constant $C=C(\alpha,\xi)$, such that
\begin{equation}\label{Calphaxi}
\#\left\{1\leq n\leq N:~\|\xi \alpha^{n}-\eta\|\geq\delta_3\right\}\geq \lfloor C\log N  \rfloor  
\end{equation}
holds for all positive integers $N$. Moreover, if $1\leq \xi \leq \alpha$, then there exists $\delta_4\in(0,\frac{1}{2})$ that depends only on $\alpha$, and a positive constant $C=C(\alpha)$, such that
\begin{equation}\label{Calpha}
\#\left\{1\leq n\leq N:~\|\xi \alpha^{n}\|\geq\delta_4\right\}\geq \lfloor C\log N  \rfloor  
\end{equation}
holds for all positive integers $N$. 
\end{thm}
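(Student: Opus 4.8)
The plan is to reduce both statements to Theorem~\ref{thm-2} by exploiting the conjugate $\alpha_j$ of $\alpha$ lying outside the unit circle. Write $\beta=\alpha_j$, so $|\beta|>1$, and note $\mathbb{Q}(\beta)\cong\mathbb{Q}(\alpha)$ via the field isomorphism $\sigma$ fixing $\mathbb{Q}$ and sending $\alpha\mapsto\beta$. The key observation is that we do not need $\xi$ to be generic over $\mathbb{Q}(\alpha)$ itself; instead, for a \emph{suitable} integer shift we can arrange the hypotheses of Theorem~\ref{thm-2} to hold for the number $\beta$ in place of $\alpha$. Concretely, I would first handle the case $\xi\in\mathbb{Q}(\alpha)$ separately from $\xi\notin\mathbb{Q}(\alpha)$: if $\xi\notin\mathbb{Q}(\alpha)$, then either $\xi$ is not Liouville over $\mathbb{Q}(\alpha)$ and Theorem~\ref{thm-2} applies directly, or $\xi$ is Liouville over $\mathbb{Q}(\alpha)$, in which case $\xi$ is extremely well approximable by elements of $\mathbb{Q}(\alpha)$, and one shows (using that a Liouville-type approximation forces the relevant ``error terms'' to decay faster than any exponential) that the dynamics of $\xi\alpha^n\bmod 1$ is governed, up to a controlled discrepancy, by that of $\beta\alpha^n$-type expressions — but this is delicate, so it is cleaner to treat the Liouville case by the same device as $\xi\in\mathbb{Q}(\alpha)$.

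So suppose $\xi\in\mathbb{Q}(\alpha)$ (the Liouville case being analogous). Then $\xi=g(\alpha)$ for some $g\in\mathbb{Q}[x]$ of degree $<d$, and I consider the full set of conjugates: for each embedding $\sigma_i$, the number $\xi_i:=\sigma_i(\xi)=g(\alpha_i)$. The trace-like quantity $T_n:=\sum_{i=1}^d \xi_i\alpha_i^{\,n}$ is, up to clearing denominators by a fixed integer $D$ independent of $n$, a rational integer (indeed $D\,T_n\in\mathbb{Z}$ for all $n$, with $D$ depending only on $\alpha,\xi$). The conjugates of $\alpha$ split as those inside/on the unit circle and those strictly outside; by hypothesis there is at least one strictly outside. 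If \emph{all} conjugates other than $\alpha$ itself were strictly inside the unit circle, then $\alpha$ would be a Pisot-type number and $\|\xi\alpha^n\|$ could tend to $0$, which is exactly the Fibonacci obstruction — so the hypothesis that some conjugate lies \emph{outside} is what saves us: the term $\xi_j\beta^n$ with $|\beta|>1$ grows, and cannot be cancelled by the bounded-modulus conjugates, so $T_n$ itself grows and $\|\xi\alpha^n - (T_n - \xi\alpha^n - \sum_{i\neq 1,j}\xi_i\alpha_i^n)\|$ is controlled. The upshot is that I would compare $\xi\alpha^n\bmod 1$ with $-\xi_j\beta^n\bmod 1$ (modulo the bounded contribution of the on-circle conjugates and the integer $T_n$), and then apply Theorem~\ref{thm-2} (or rather its proof) to the number $\beta$ with a base that is a suitable power $\alpha^k$ chosen so that $\beta^k$ dominates every other conjugate — reducing to a situation where $\xi_j$ plays the role of a number not Liouville over $\mathbb{Q}(\beta)$.

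For the ``Moreover'' part, the gain is that once $\eta=0$, I can absorb the normalization $1\le\xi\le\alpha$: the constant $\delta_4$ and $C$ should not depend on $\xi$ because the map $\xi\mapsto\{\xi\alpha^n\}$ only sees $\xi$ modulo the lattice generated by powers, and restricting to the fundamental domain $[1,\alpha]$ makes the relevant heights (the denominators $D$, the length-type quantities) uniformly bounded in terms of $\alpha$ alone; I would run the same argument, tracking that every implied constant depends only on $\alpha$, and set $\delta_3=\delta_4$ if convenient. \textbf{The main obstacle} I anticipate is the Liouville case: when $\xi$ is Liouville over $\mathbb{Q}(\alpha)$ one cannot invoke Theorem~\ref{thm-2}, and one must instead show that an extremely good approximation $\xi\approx\beta_0\in\mathbb{Q}(\alpha)$ with small height still lets $\xi\alpha^n$ inherit the non-trivial distribution of $\beta_0\alpha^n$ for a range $1\le n\le N$ roughly of size $\log N$, which requires carefully balancing the Liouville approximation quality against the exponential growth $\alpha^n$ over that range. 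This is exactly where keeping $\eta$ small (the condition $\Tilde{L}(\alpha)|\eta|<1$) and using the conjugate outside the unit circle to produce an unconditional lower bound on $\|T_n\|$-type quantities becomes essential.
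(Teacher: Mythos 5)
Your proposal identifies the right hypothesis to exploit (a conjugate $\beta=\alpha_2$ with $|\beta|>1$), but the plan — reduce to Theorem~\ref{thm-2}, then patch the remaining cases $\xi\in\mathbb{Q}(\alpha)$ and ``$\xi$ Liouville'' by a trace device — has a genuine gap, which you yourself flag. The paper does \emph{not} reduce to Theorem~\ref{thm-2} and never needs any non-Liouville assumption on $\xi$. Here is where your sketch goes wrong and what the actual argument does.

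Your trace argument is on the wrong track. After imposing $k$ consecutive small values of $\|\xi\alpha^n-\eta\|$, one deduces (as in the proof of Theorem~\ref{thm-2}) the recurrence $a_dA_{n+d}+\cdots+a_0A_n=0$ for the nearest integers $A_n$, and Lemma~\ref{lem-1} writes $A_{n+i}=\sum_{j=1}^d\theta_j\alpha_j^{n+i}$. The coefficients $\theta_j$ are the outputs of solving a linear system (Lemma~\ref{lem-2}), not the conjugates of $\xi$; nothing requires $\xi\in\mathbb{Q}(\alpha)$ for this to make sense, and the quantity $A_n$ is already an integer, so no trace/clearing-denominator step is needed. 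Also, you cannot meaningfully apply ``Theorem~\ref{thm-2} to $\beta$'': $\beta$ is typically complex, so the statement $\|\xi_j\beta^n-\eta\|\geq\delta$ does not parse, and there is no way to transport a statement about fractional parts across a field embedding into $\mathbb{C}$.

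The idea you are missing is to bound $\theta_2$ (the coefficient attached to the conjugate $\alpha_2$ outside the unit circle) from both sides, using Garsia's lemma (Lemma~\ref{lem-4}) rather than any Liouville-type hypothesis on $\xi$. Applying Lemma~\ref{lem-2} to the first $d$ of the $k$ good indices gives $\theta_2\alpha_2^{n}=G_1(\alpha_2)/G_0(\alpha_2)$, where $G_1$ is an integer polynomial of degree $d-1$ (hence $G_1(\alpha_2)\neq0$, since $\alpha_2$ has degree $d$). Lemma~\ref{lem-4} then gives $|G_1(\alpha_2)|\gtrsim \overline{G_1}^{-(d-1)}\gtrsim (|\xi|\alpha^n)^{-(d-1)}$, and hence a \emph{lower} bound $|\theta_2|\gtrsim |\alpha_2|^{-n}\alpha^{-n(d-1)}$. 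Applying Lemma~\ref{lem-2} to the \emph{last} $d$ indices and using $|\varepsilon_{n+k-d+1+s}|\leq 2$ gives an \emph{upper} bound $|\theta_2|\lesssim |\alpha_2|^{-(n+k-d+1)}$. Because $|\alpha_2|>1$, comparing the two bounds forces $k\lesssim n$, and Lemma~\ref{lem-3} finishes. There is no Liouville dichotomy at any point: the Diophantine input is entirely Garsia's inequality on $G_1(\alpha_2)$, a statement about integer polynomials evaluated at $\alpha_2$ that is independent of any approximation property of $\xi$. This is also precisely why the ``Moreover'' part works: $\xi$ enters only through the crude bound $|A_{n+d-1}|\leq|\xi|\alpha^{n+d-1}+2$ controlling $\overline{G_1}$, which is uniform once $1\leq\xi\leq\alpha$.

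So the correct diagnosis of your obstacle is that you chose the wrong Diophantine tool: you tried to make $\xi$'s own approximability do the work, whereas the extra conjugate outside the unit circle lets you replace that entirely by a height lower bound on an integer polynomial at $\alpha_2$.
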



Since \(\|x-\eta-a\|=\|x-\eta\|\) for every \(a\in\mathbb Z\), the
condition \(\widetilde L(\alpha)|\eta|<1\) in Theorems~\ref{thm-2} and~\ref{thm-3} may be replaced by
\(\widetilde L(\alpha)\|\eta\|<1\). In particular, if
\(\widetilde L(\alpha)=1\), then \(\eta\) is arbitrary. If
\(\widetilde L(\alpha)=2\), then \(\eta\) is arbitrary except when
\(\eta-\frac12\in\mathbb Z\).

\medskip

This paper is organized as follows. In Section~\ref{sec:prelim}, we introduce some preliminaries. Section~\ref{sec:proofs} is devoted to the proofs of Theorems~\ref{thm-1},~\ref{thm-2}, and~\ref{thm-3}. In Section~\ref{sec:app}, we show an application to Fourier analysis of self-similar measures.

\section{Preliminaries}\label{sec:prelim}
Let $\alpha>1$ be a real algebraic number and $f(x)=a_dx^{d}+a_{d-1}x^{d-1}+\cdots+a_{1}x+a_{0}\in \mathbb{Z}[x]$ be its minimal polynomial. Denote the height of $f$ by $\overline{f}=\max_{0\leq i\leq d}|a_{i}|$, and $\alpha=\alpha_{1},\alpha_2,\cdots,\alpha_{d}$ by the conjugates of $\alpha$. Note that $f(x)$ is irreducible and all conjugates of $\alpha$ are distinct.

The following two lemmas can be found in \cite[Chapter VIIII, Section 2]{C57}.

\begin{lem}\label{lem-1}
Let \(K\ge 0\). Suppose that a complex sequence \((A_j)\) satisfies
\[
a_dA_{j+d}+a_{d-1}A_{j+d-1}+\cdots+a_0A_j=0
\]
for all \(j=n,n+1,\ldots,n+K-1\). Then there exist complex numbers
\(\theta_1,\ldots,\theta_d\) such that
\[
A_j=\theta_1\alpha_1^j+\cdots+\theta_d\alpha_d^j
\]
for all \(j=n,n+1,\ldots,n+K+d-1\).
\end{lem}

\begin{lem}\label{lem-2}
The system of equations
 \begin{equation*}
X_1\alpha_1^{n}+X_2\alpha_2^{n}+\cdots+X_d\alpha_d^{n}=Y_{n},\quad \mbox{for all}\;n=0,1,\ldots,d-1,
 \end{equation*}
has a unique solution
  \begin{equation*}
X_{n}=\frac{1}{G_0(\alpha_n)}\sum_{k=0}^{d-1}\beta_{n,k}Y_k,\quad \mbox{for all}\;n=1,\ldots,d,
 \end{equation*}
where $G_0(x)=\sum_{m=1}^{d}ma_m x^{m-1}$ and $\beta_{n,k}=\sum_{m=k+1}^{d}a_m\alpha_n^{m-k-1}$. 
\end{lem}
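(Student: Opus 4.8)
The plan is to recognize the linear system as a Vandermonde system and to identify the proposed solution formula with Lagrange interpolation data for the nodes $\alpha_1,\dots,\alpha_d$.

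First I would record two elementary facts about the minimal polynomial $f(x)=\sum_{m=0}^{d}a_m x^m$. One: the expression $G_0(x)=\sum_{m=1}^{d}m a_m x^{m-1}$ is precisely the formal derivative $f'(x)$; since $f$ is irreducible, all its roots $\alpha_1,\dots,\alpha_d$ are simple, hence $G_0(\alpha_n)=f'(\alpha_n)\neq 0$ and the asserted formula for $X_n$ is well defined. Two: perform the polynomial division $f(x)=(x-\alpha_n)q_n(x)$ with $q_n(x)=\sum_{k=0}^{d-1}c_{n,k}x^{k}$; equating coefficients of $x^{k}$ on both sides (and using $f(\alpha_n)=0$ for the constant term) gives the recursion $c_{n,d-1}=a_d$ and $c_{n,k-1}=a_k+\alpha_n c_{n,k}$, whose closed-form solution is exactly $c_{n,k}=\sum_{m=k+1}^{d}a_m\alpha_n^{\,m-k-1}=\beta_{n,k}$. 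Also, differentiating $f(x)=(x-\alpha_n)q_n(x)$ and evaluating at $\alpha_n$ yields $q_n(\alpha_n)=f'(\alpha_n)=G_0(\alpha_n)$.

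Next, set $\ell_n(x):=q_n(x)/G_0(\alpha_n)=f(x)\big/\bigl((x-\alpha_n)f'(\alpha_n)\bigr)$. By construction $\ell_n(\alpha_j)=\delta_{nj}$, so $\ell_1,\dots,\ell_d$ are the Lagrange basis polynomials for the nodes $\alpha_1,\dots,\alpha_d$, and the coefficient of $x^{k}$ in $\ell_n(x)$ is $\beta_{n,k}/G_0(\alpha_n)$. For each $j\in\{0,1,\dots,d-1\}$ the monomial $x^{j}$ has degree $<d$, hence equals its own Lagrange interpolant: $x^{j}=\sum_{n=1}^{d}\alpha_n^{\,j}\,\ell_n(x)$. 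Comparing the coefficient of $x^{k}$ on the two sides gives the orthogonality relation
$$\sum_{n=1}^{d}\frac{\beta_{n,k}}{G_0(\alpha_n)}\,\alpha_n^{\,j}=\delta_{jk},\qquad 0\le j,k\le d-1.$$

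Finally I would substitute the claimed $X_n=\frac{1}{G_0(\alpha_n)}\sum_{k=0}^{d-1}\beta_{n,k}Y_k$ into the left-hand side of the system and swap the order of summation:
$$\sum_{n=1}^{d}X_n\alpha_n^{\,j}=\sum_{k=0}^{d-1}Y_k\sum_{n=1}^{d}\frac{\beta_{n,k}}{G_0(\alpha_n)}\alpha_n^{\,j}=\sum_{k=0}^{d-1}Y_k\,\delta_{jk}=Y_j,$$
so the formula does solve the system for $j=0,1,\dots,d-1$. Uniqueness is immediate: the coefficient matrix $(\alpha_n^{\,j})_{0\le j\le d-1,\,1\le n\le d}$ is a Vandermonde matrix of determinant $\prod_{i<n}(\alpha_n-\alpha_i)\neq 0$, the conjugates being distinct. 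I do not expect a genuine obstacle here; the only mildly delicate points are purely bookkeeping — matching the downward recursion for $c_{n,k}$ with the closed form of $\beta_{n,k}$, and keeping straight the two index ranges ($n=0,\dots,d-1$ indexing the equations versus $n=1,\dots,d$ indexing the unknowns) in the lemma's notation.
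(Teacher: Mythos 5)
Your proof is correct: identifying $G_0$ with $f'$, the coefficients $\beta_{n,k}$ with those of $f(x)/(x-\alpha_n)$, and the solution formula with the Lagrange basis polynomials $\ell_n(x)=f(x)/\bigl((x-\alpha_n)f'(\alpha_n)\bigr)$ is exactly the classical argument. The paper itself gives no proof, only a citation to Cassels, and the argument there is essentially the same Lagrange-interpolation/Vandermonde computation you carry out, so there is nothing to add.
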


The following lemma lists some basic properties of the height of algebraic numbers; see \cite[Proposition 1.2]{Za18}.
\begin{lem}\label{lem:height}
If $\beta_1,\ldots,\beta_n$ are algebraic numbers, then 
\begin{enumerate}[(i)]
\item  $h(\beta_1+\cdots+\beta_n)\leq h(\beta_1)+\cdots+h(\beta_n)+\log n$.
\item $h(\beta_1\cdots\beta_n)\leq h(\beta_1)+\cdots+h(\beta_n)$.
\item $h(\beta^n)=|n|h(\beta)$ for any algebraic number $\beta$ and $n\in\mathbb{Z}$.
\end{enumerate}
\end{lem}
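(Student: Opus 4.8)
The plan is to derive all three inequalities from the place-theoretic description of the absolute logarithmic Weil height, which is the standard route (this is why the lemma is quoted from \cite[Proposition 1.2]{Za18}). First I would fix a number field $K$ containing the algebraic numbers in question (for part (iii), take $K=\mathbb{Q}(\beta)$), write $M_K$ for its set of places, and for $v\in M_K$ set $d_v=[K_v:\mathbb{Q}_v]$, with the absolute values $|\cdot|_v$ normalized so that the product formula $\sum_{v\in M_K}d_v\log|x|_v=0$ holds for all $x\in K^\times$. Recall that $h$ does not depend on the choice of $K$, that
\[
h(\beta)=\frac{1}{[K:\mathbb{Q}]}\sum_{v\in M_K}d_v\log^+|\beta|_v,\qquad \log^+ t:=\log\max\{1,t\},
\]
coincides with $\tfrac1d\log M(\beta)$ by the usual place-by-place computation of the Mahler measure, and that $\sum_{v\mid\infty}d_v=[K:\mathbb{Q}]$.

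For (ii) I would use that at every place $v$ one has $\max\{1,|\beta_1\cdots\beta_n|_v\}\le\prod_{i=1}^n\max\{1,|\beta_i|_v\}$; taking logarithms, multiplying by $d_v/[K:\mathbb{Q}]$, and summing over $v\in M_K$ gives (ii). For (i) I would split $M_K$ into its non-archimedean and archimedean parts. At a non-archimedean $v$ the ultrametric inequality gives $|\beta_1+\cdots+\beta_n|_v\le\max_i|\beta_i|_v$, hence $\max\{1,|\sum_i\beta_i|_v\}\le\prod_i\max\{1,|\beta_i|_v\}$. At an archimedean $v$ the triangle inequality gives $|\beta_1+\cdots+\beta_n|_v\le n\max_i|\beta_i|_v$, hence $\max\{1,|\sum_i\beta_i|_v\}\le n\prod_i\max\{1,|\beta_i|_v\}$. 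Taking logs, weighting by $d_v/[K:\mathbb{Q}]$, and summing, the extra $\log n$ contributes $\frac{\log n}{[K:\mathbb{Q}]}\sum_{v\mid\infty}d_v=\log n$, which yields (i).

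For (iii) with $n\ge 0$: since $\log^+|\beta^n|_v=n\log^+|\beta|_v$ at every $v$, summing gives $h(\beta^n)=n\,h(\beta)$ (with $h(1)=0$ covering $n=0$). For $n<0$ it suffices to prove $h(\beta^{-1})=h(\beta)$: at each place, checking the two cases $|\beta|_v\ge 1$ and $|\beta|_v<1$ separately shows $\log\max\{1,|\beta|_v\}-\log\max\{1,|\beta^{-1}|_v\}=\log|\beta|_v$, so summing against $d_v/[K:\mathbb{Q}]$ and invoking the product formula gives $h(\beta)-h(\beta^{-1})=0$; combined with the $n\ge 0$ case this gives $h(\beta^n)=|n|\,h(\beta)$ for all $n\in\mathbb{Z}$.

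There is no genuine obstacle here: each inequality is an immediate place-by-place estimate. The only point requiring care is bookkeeping with normalizations, i.e. choosing the absolute values so that the product formula and the identity $\sum_{v\mid\infty}d_v=[K:\mathbb{Q}]$ hold simultaneously, and verifying that this place-theoretic height agrees with $\tfrac1d\log M(\cdot)$ as used elsewhere in the paper; once these conventions are fixed the proof is routine.
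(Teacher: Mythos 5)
The paper does not prove this lemma; it simply cites \cite[Proposition 1.2]{Za18}. Your place-theoretic argument is the standard proof of exactly that cited result, and it is correct as written: the local estimates for (i) and (ii), the accounting of the $\log n$ term via $\sum_{v\mid\infty}d_v=[K:\mathbb{Q}]$, and the use of the product formula to get $h(\beta^{-1})=h(\beta)$ in (iii) are all sound. The one point you rightly flag — that the height defined in the paper as $\tfrac1d\log M(\alpha)$ agrees with the sum of local terms $\log^+|\cdot|_v$ — is the standard Mahler-measure/Jensen computation and is indeed routine.
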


The following elementary separation estimate is a standard variant of
Garsia's separation lemma \cite[Lemma 1.51]{G62}. We include a short proof for the sake of completeness.

\begin{lem}\label{lem:Garsia}
Let \(\lambda\) be an algebraic number with \(|\lambda|>1\), and let
\(\lambda=\lambda_1,\lambda_2,\ldots,\lambda_s\) be its conjugates. Let
$p(x)=c_mx^{m}+c_{m-1}x^{m-1}+\cdots+c_{1}x+c_{0} \in \mathbb{Z}[x]$ be a polynomial. If $p(\lambda)\neq 0$, then 
$$
|p(\lambda)|\geq \overline{p}^{\,1-s}(m+1)^{1-s}M(\lambda)^{-m}.
$$
\end{lem}
\begin{proof}
Let \(F(x)=a_s\prod_{i=1}^s(x-\lambda_i)\in\Z[x]\) be the minimal
polynomial of \(\lambda\). Since \(p(\lambda)\neq0\), the resultant
\(\operatorname{Res}(F,p)\) is a non-zero rational integer. Hence
\[
        1\le |\operatorname{Res}(F,p)|
        = |a_s|^m\prod_{i=1}^s |p(\lambda_i)|.
\]
For \(2\le i\le s\), we have
\[
        |p(\lambda_i)|
        \le \overline p\sum_{j=0}^m|\lambda_i|^j
        \le \overline p(m+1)\max\{1,|\lambda_i|\}^m.
\]
Therefore,
\[
        |p(\lambda)|
        \ge
        |a_s|^{-m}\overline p^{\,1-s}(m+1)^{1-s}
        \prod_{i=2}^s\max\{1,|\lambda_i|\}^{-m}.
\]
Since \(|\lambda|>1\), we have
\[
        M(\lambda)
        =
        |a_s||\lambda|\prod_{i=2}^s\max\{1,|\lambda_i|\}
        \ge
        |a_s|\prod_{i=2}^s\max\{1,|\lambda_i|\}.
\]
Thus
\[
        |p(\lambda)|
        \ge
        \overline p^{\,1-s}(m+1)^{1-s}M(\lambda)^{-m},
\]
as desired.
\end{proof}

\section{Proofs of Theorems}\label{sec:proofs}
In this section, we shall provide proofs of our theorems. 

One ingredient of our proofs is the following combinatorial lemma. A version of the lemma has appeared in \cite[Proposition 2.1]{GM17}. Here we present a simple proof. 
\begin{lem}\label{lem-3}
Let $\xi, \alpha, \eta$ be fixed real numbers. Let $0<\delta<1$ be a constant. Suppose that there exist constants $\gamma,\gamma_{0}>0$ such that
 \begin{equation*}
\sup\{k: \|\xi\alpha^n-\eta\|<\delta,\|\xi\alpha^{n+1}-\eta\|<\delta,\ldots,\|\xi\alpha^{n+k}-\eta\|<\delta\}<\gamma n+\gamma_{0}
\end{equation*}
holds for all $n \in \N$. If $\gamma_0/\gamma \leq S$, then there is a positive constant $C$ depending on $\gamma$ and $S$, such that
\[\#\left\{1\leq n\leq N:~ \|\xi \alpha^{n}-\eta\|\geq\delta\right\}\geq \lfloor C\log N  \rfloor  \]
holds for all $N \in \N$.
\end{lem}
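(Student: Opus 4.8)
The plan is to prove the contrapositive-style estimate directly: assuming the ``long run'' bound on consecutive indices $n$ with $\|\xi\alpha^n-\eta\|<\delta$, deduce that the indices where $\|\xi\alpha^n-\eta\|\geq\delta$ cannot be too sparse. The key observation is that the hypothesis controls the maximal length of a block of consecutive ``small'' indices that starts at position $n$: such a block has length $<\gamma n+\gamma_0$. So if I walk along the integers $1,2,\dots,N$, every maximal run of consecutive small indices is short relative to where it begins, which forces a ``large'' index to appear frequently.

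First I would set up the combinatorial counting. Let $1\leq n_1<n_2<\cdots<n_M\leq N$ enumerate the indices in $[1,N]$ with $\|\xi\alpha^{n}-\eta\|\geq\delta$ (so $M$ is exactly the quantity we want to bound below); if $M=0$ the hypothesis with $n=1$ already gives a contradiction for large $N$, so assume $M\geq 1$. Between consecutive large indices $n_i$ and $n_{i+1}$, all of $n_i+1,\dots,n_{i+1}-1$ are small, i.e.\ there is a run of small indices of length $n_{i+1}-n_i-1$ starting at $n_i+1$. Applying the hypothesis at $n=n_i+1$ gives $n_{i+1}-n_i-1 < \gamma(n_i+1)+\gamma_0$, hence $n_{i+1} < (1+\gamma) n_i + (1+\gamma+\gamma_0)$. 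Similarly the initial run $1,\dots,n_1-1$ and the final run $n_M+1,\dots,N$ give $n_1 < \gamma+\gamma_0+1$ and $N - n_M - 1 < \gamma(n_M+1)+\gamma_0$, the latter yielding $N < (1+\gamma)(n_M+1)+\gamma_0 \le (1+\gamma)n_M + C'$ for a constant $C'=C'(\gamma,\gamma_0)$.

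Next I would iterate the recursive inequality $n_{i+1} < (1+\gamma)n_i + C''$ (with $C''=1+\gamma+\gamma_0$). Since $1+\gamma>1$, this telescopes to $n_i + \frac{C''}{\gamma} < (1+\gamma)^{i-1}\bigl(n_1+\frac{C''}{\gamma}\bigr)$, and combined with the bound $n_1 < \gamma+\gamma_0+1$ and with $N<(1+\gamma)n_M+C'$ one gets $N \le A\,(1+\gamma)^{M}$ for some constant $A=A(\gamma,\gamma_0)$; this is exactly the point where the ratio $\gamma_0/\gamma\leq S$ enters, letting me absorb all the additive constants into a single constant $A$ that depends only on $\gamma$ and $S$ rather than separately on $\gamma_0$. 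Taking logarithms gives $\log N \le \log A + M\log(1+\gamma)$, hence $M \ge \frac{\log N - \log A}{\log(1+\gamma)}$, and for $N$ large this is $\ge C\log N$ with $C=C(\gamma,S)$; for the finitely many remaining small $N$ one adjusts $C$ downward (using $M\geq 1$) so that the inequality holds for all $N\in\N$.

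The routine part is the bookkeeping with the additive constants; the only mildly delicate point — and the one I would be most careful about — is making the constant $C$ genuinely depend on $\gamma$ and $S$ only. This is why the statement imposes $\gamma_0/\gamma\le S$: without it, the additive slack $C''/\gamma = (1+\gamma+\gamma_0)/\gamma = 1/\gamma + 1 + \gamma_0/\gamma$ could be arbitrarily large, and although one would still get a $\log N$ bound, its constant would degrade. With the bound $\gamma_0/\gamma\le S$ in hand, every constant appearing ($C''/\gamma$, $A$, and finally $C$) is a function of $\gamma$ and $S$ alone, which is precisely the form needed when this lemma is later invoked with $\gamma,\gamma_0$ arising from the Diophantine estimates in the proofs of Theorems~\ref{thm-1}, \ref{thm-2}, and \ref{thm-3}.
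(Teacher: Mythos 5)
Your proof is correct and follows essentially the same path as the paper's: enumerate the ``large'' indices, use the hypothesis to obtain a recurrence of the form $n_{i+1} < (1+\gamma)n_i + \mathrm{const}$, add a well-chosen constant ($C''/\gamma$ in your notation, the paper's $B=(\gamma+\gamma_0+2)/\gamma$) to turn it into a clean geometric inequality, and conclude that $N$ grows at most exponentially in the count $M$, which yields $M\gtrsim\log N$ with a constant controlled via $\gamma_0/\gamma\leq S$. The only differences are cosmetic bookkeeping (off-by-one additive constants and whether one sets $x_0=0$ rather than treating the initial and final runs separately), which do not affect the argument.
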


\begin{proof}
Let $X=\{n \in \mathbb{N}: \|\xi \alpha^{n}-\eta\|\geq\delta\}$ and label the elements in $X$ in increasing order by $x_1, x_2, \cdots$. Let $x_0=0$. By the assumption, for each $m \geq 0$, we have $$x_{m+1}\leq x_m+\gamma(x_m+1)+\gamma_0+2=x_m(1+\gamma)+(\gamma+\gamma_0+2).$$
It follows that $x_{m+1}+B\leq (1+\gamma)(x_m+B)$, where $B=(\gamma+\gamma_0+2)/\gamma$. Thus, $x_m+B\leq B(1+\gamma)^m $ for all $m \in \mathbb{N}$. Since $B \leq S+1+2/\gamma$,  the lemma follows readily.
\end{proof}

By Lemma~\ref{lem-3}, in order to prove our main results, it suffices to show that the number of maximum consecutive blocks of $(\xi\alpha^{n})$ falling into a small region cannot be too large. This will be achieved by various tools from Diophantine approximation. 

We first present the proof of Theorem \ref{thm-1} for the special case $\alpha=\frac{p}{q}$ is rational, before moving to the more technical proofs for general algebraic numbers.

\begin{proof}[Proof of Theorem \ref{thm-1}]
Define $A_{n}=A_n\left(\xi,\alpha,\eta \right)$ and $\varepsilon_{n}=\varepsilon_n\left(\xi,\alpha,\eta \right)$ as follows:
\[\xi\alpha^n=A_{n}+\eta+\varepsilon_{n}, \qquad A_{n}\in\mathbb{Z},\quad-\frac{1}{2}<\varepsilon_{n}\leq\frac{1}{2}.\]
Observe that $|\varepsilon_{n}|=\|\xi\alpha^n-\eta\|$.
Consider
\begin{align*}
    \xi\alpha^{n}=A_{n}+\eta+\varepsilon_{n},\qquad& \xi\alpha^{n+1}=A_{n+1}+\eta+\varepsilon_{n+1}.
\end{align*}
Choose $\delta_{1}:=\frac{1-(p-q)|\eta|}{p+q}$. By the assumption $(p-q)|\eta|<1$, we have $0<\delta_{1}\leq \frac{1}{p+q}$. 

If $\max\left\{|\varepsilon_{n}|,|\varepsilon_{n+1}|\right\}<\delta_{1}$, we have
\begin{equation}\label{(2.4)}
|pA_{n}-qA_{n+1}|\leq (p-q)|\eta|+|p\varepsilon_{n}-q\varepsilon_{n+1}|<(p-q)|\eta|+(p+q)\delta_{1}= 1.    
\end{equation}
Since the left-hand side of inequality~\eqref{(2.4)} is a rational integer, it follows that
\[qA_{n+1}=pA_{n}.\]
\par Similarly, if there are $k$ consecutive indices followed by the index $n$ such that
$$\max\left\{|\varepsilon_{n}|,|\varepsilon_{n+1}|,\ldots,|\varepsilon_{n+k}|\right\}<\delta_{1},$$
then
\begin{equation}\label{(2.5)}
q^kA_{n+k}=p^{k} A_{n}.
\end{equation}
Next, we consider two cases according to whether $q=1$. In both cases, we show that $k$ is at most linear in $n$, and thus the statement of the theorem follows from Lemma \ref{lem-3}.

\textbf{Case 1: } $q=1$, that is, $\alpha=p$. Equation~\eqref{(2.5)} implies that $A_{n+k}=p^k A_n$. Hence
\[
        |p^n\xi-A_n|
        =
        p^{-k}|p^{n+k}\xi-A_{n+k}|
        =
        p^{-k}|\eta+\varepsilon_{n+k}|
        \leq 2p^{-k}.
\]
In particular,
\begin{equation}\label{eq:vp-upper}
        \|p^n\xi\|\leq 2p^{-k}.
\end{equation}
Let \(v=v_p(\xi)<\infty\). By the definition of \(v_p(\xi)\), for every \(\epsilon>0\), and after decreasing the constant to deal with finitely many exceptional values of \(n\), there exists \(C(\xi,\epsilon)>0\) such that
\begin{equation}\label{eq:vp-lower}
        \|p^n\xi\|\geq C(\xi,\epsilon)p^{-n(v+\epsilon)}
\end{equation}
for all positive integers \(n\). Here \(v_p(\xi)<\infty\) also excludes the possibility that
\(\|p^n\xi\|=0\) for some \(n\), since then \(\|p^m\xi\|=0\) for all
\(m\ge n\), and hence \(v_p(\xi)=\infty\). Combining inequalities~\eqref{eq:vp-upper} and \eqref{eq:vp-lower}, we obtain
\[
        C(\xi,\epsilon)p^{-n(v+\epsilon)}\leq 2p^{-k}.
\]
It follows that
\[
        k\leq (v_p(\xi)+\epsilon)n+\frac{\log 2-\log C(\xi,\epsilon)}{\log p}.
\]

\textbf{Case 2: }$q\geq 2$. Since $\gcd(p,q)=1$, equation~\eqref{(2.5)} implies that $q^k$ divides $A_n$ and thus $|A_n|\geq q^k$ (assuming $n$ is sufficiently large). It follows that $q^k \leq |A_n|\leq |\xi|\alpha^n +2$ and thus $$k \leq \frac{n\log \alpha}{\log q}+C(\xi, \alpha),$$
as required.
\end{proof}

Next, we prove Theorem \ref{thm-2}.

\begin{proof}[Proof of Theorem \ref{thm-2}]
Define $A_{n}=A_n\left(\xi,\alpha,\eta \right)$ and $\varepsilon_{n}'=\varepsilon_n'\left(\xi,\alpha,\eta \right)$ as follows:
\[\xi\alpha^n=A_{n}+\eta+\varepsilon_{n}', \qquad A_{n}\in\mathbb{Z},\quad-\frac{1}{2}<\varepsilon_{n}'\leq\frac{1}{2}.\]
Set $\varepsilon_{n}=\eta+\varepsilon_{n}'$ so that $A_{n}=\xi\alpha^n-\varepsilon_{n}$ and $|\varepsilon_{n}|< 2$. 

Since $\alpha$ is an algebraic number with minimal polynomial $f(x)$, it follows that
\[\xi\alpha^{n}(a_{d}\alpha^{d}+a_{d-1}\alpha^{d-1}+\cdots+a_{1}\alpha+a_{0})=0.\]
Thus
\begin{equation}\label{2.8}
|a_{d}A_{n+d}+a_{d-1}A_{n+d-1}+\cdots+a_{0}A_{n}|=|a_{d}\varepsilon_{n+d}+a_{{d-1}}\varepsilon_{n+d-1}+\cdots+a_{0}\varepsilon_{n}|.    
\end{equation}
Take $\delta_{2}:=\frac{1-|\eta|\widetilde{L}(\alpha)}{L(\alpha)}$. 
By the assumption $|\eta|\widetilde{L}(\alpha)<1$, we have $0<\delta_2\leq \frac{1}{L(\alpha)}$. Assume that
\begin{equation}\label{2.9}
\max\left\{|\varepsilon_{n}'|,|\varepsilon_{n+1}'|,\ldots,|\varepsilon_{n+d}'|\right\}<\delta_{2},
\end{equation}
then equation~\eqref{2.8} and inequality~\eqref{2.9} imply that
$$|a_{d}A_{n+d}+a_{d-1}A_{n+d-1}+\cdots+a_{0}A_{n}|< |\eta|\widetilde{L}(\alpha)+\delta_{2}(|a_{0}|+|a_{1}|+\cdots+|a_{d}|)=|\eta|\widetilde{L}(\alpha)+\delta_2 L(\alpha)=1.$$
As before, the left-hand side is a rational integer, thus it is equal to zero. That is,
\begin{equation}\label{2.10}
a_{d}A_{n+d}+a_{d-1}A_{n+d-1}+\cdots+a_{0}A_{n}=0.
\end{equation}

In the following discussion, assume that there are \(k+2\) consecutive indices starting from the index
\(n\) such that
\[
\max \left\{
|\varepsilon'_{n}|, |\varepsilon'_{n+1}|,\ldots,|\varepsilon'_{n+k+1}|
\right\}<\delta_2.
\]
If \(k<d-2\), then there is nothing to prove. Hence we may assume that
\(k\ge d-2\). Then equation~\eqref{2.10} holds for
\[
        j=n,n+1,\ldots,n+k+1-d.
\]
Applying Lemma~\ref{lem-1} with \(K=k+2-d\), we obtain
complex numbers \(\theta_1,\theta_2,\ldots,\theta_d\) such that
\[
A_{n+i}=\theta_1\alpha_1^{n+i}+\theta_2\alpha_2^{n+i}
+\cdots+\theta_d\alpha_d^{n+i},
\quad 0\le i\le k+1.
\]
Applying Lemma \ref{lem-2} with
$X_1=\theta_1\alpha^{n},\ldots,X_d=\theta_d\alpha_d^{n}$, and $Y_i=A_{n+i},$ we have
\begin{equation}\label{2.11}
\theta_1\alpha^{n}=\frac{G_1(\alpha)}{G_0(\alpha)}.
\end{equation}
where \[G_0(\alpha)=\sum_{m=1}^{d}ma_m \alpha^{m-1}, \quad G_1(\alpha)=\sum_{i=0}^{d-1}\sum_{m=i+1}^{d}a_m\alpha^{m-i-1}A_{n+i}.\]
Since $a_m$ is a rational integer for each $0 \leq m \leq d$ and $A_{n+i}$ is a rational integer for each $0 \leq i \leq k$, it follows that $G_0(\alpha)$ and $G_1(\alpha)$ are both in $\Q(\alpha)$. Thus, equation~\eqref{2.11} implies that $\theta_1\in\Q(\alpha)$. 

We claim that
\begin{equation}\label{2.12}
h(\theta_1)\leq \bigg(\frac{1}{2}(d^2+d)\log\alpha+h(\alpha)\bigg )n+C(\xi,\alpha).
\end{equation}
\medskip
Indeed, using Lemma~\ref{lem:height},
\begin{equation}\label{2.13}
h(\theta_1)=h\bigg(\frac{G_1(\alpha)}{G_0(\alpha)\alpha^{n}}\bigg)\leq h\bigg(\frac{G_1(\alpha)}{G_0(\alpha)}\bigg)+h(\alpha^{-n})\leq nh(\alpha)+h({G_1(\alpha)})+h({G_0(\alpha)}).
\end{equation}
By Lemma \ref{lem-2} and Lemma~\ref{lem:height},
\begin{equation}\label{2.14}
h({G_0(\alpha)})=h\bigg(\sum_{m=1}^{d}ma_m \alpha^{m-1}\bigg)\leq\frac{d(d-1)}{2}h(\alpha)+\sum_{m=1}^{d}\log (m\max\{1,|a_m|\})+\log d.
\end{equation}
Observe that for \(0\le i\le d-1\), we have
\[
        |A_{n+i}|\le |\xi|\alpha^{n+i}+2
        \le |\xi|\alpha^{n+d-1}+2.
\]
Thus
\begin{align}
h(G_1(\alpha))
&=h\bigg(\sum_{i=0}^{d-1}\sum_{m=i+1}^{d}a_m\alpha^{m-i-1}A_{n+i}\bigg) \notag\\
&\le
\sum_{i=0}^{d-1}\sum_{m=i+1}^d
\bigl(\log\max\{1,|a_m|\}+(m-i-1)h(\alpha)\bigr)  \notag\\
&\quad+
\frac{d(d+1)}2\log\bigl(|\xi|\alpha^{n+d-1}+2\bigr)
+\log\frac{d(d+1)}2 \notag\\
&\le \frac{(d^2+d)\log\alpha}{2}n+C(\alpha,\xi). \label{2.15}
\end{align}
From inequalities~\eqref{2.13},~\eqref{2.14}, and~\eqref{2.15}, we obtain the claim \eqref{2.12}.

On the other hand, for each $0 \leq i \leq k+1$, from the definition of $A_{n+i}$, we have
$$
A_{n+i}=\xi\alpha^{n+i}-\varepsilon_{n+i}=\xi\alpha_1^{n+i}-\varepsilon_{n+i}=\theta_1\alpha_1^{n+i}+\theta_2\alpha_2^{n+i}+\cdots+\theta_d\alpha_d^{n+i},$$
and thus
$$\varepsilon_{n+i}=(\xi-\theta_1)\alpha_1^{n+i}-\theta_2\alpha_2^{n+i}-\cdots-\theta_d\alpha_d^{n+i}.$$
We consider the last $d$ equations
\begin{equation*}
\varepsilon_{n+l}=(\xi-\theta_1)\alpha^{n+l}-\theta_2\alpha_2^{n+l}-\cdots-\theta_d\alpha_d^{n+l}, \quad l=k-d+2,\ldots,k+1.
\end{equation*}
equivalently,
\begin{equation*}
\varepsilon_{n\!+k-\!d+\!2+s}=(\xi-\theta_1)\alpha^{n+k-d+2}\alpha^s-\theta_2\alpha_2^{n+k-d+2}\alpha_2^s-\cdots-\theta_d\alpha_d^{n+k-d+2}\alpha_d^s, \quad s=0,\ldots,d-1.
\end{equation*}
Now apply Lemma \ref{lem-2} with
$$Y_s=\varepsilon_{n+k-d+2+s},\,\,\, X_1=(\xi-\theta_1)\alpha^{n+k-d+2},\,\,\, X_2=-\theta_2\alpha_2^{n+k-d+2},\ldots,X_d=-\theta_d\alpha_d^{n+k-d+2},$$
we get
\begin{equation}\label{2.17}
X_1=(\xi-\theta_1)\alpha^{n+k-d+2}=\frac{G_2(\alpha)}{G_0(\alpha)}.
\end{equation}
where
\begin{equation}\label{G2}
G_2(\alpha)=\sum_{s=0}^{d-1}\sum_{m=s+1}^{d}a_m\alpha^{m-s-1}\varepsilon_{n+k-d+2+s}.
\end{equation}
Applying the trivial bound $|\varepsilon_{n+k-d+2+s}|\leq2$ in equation~\eqref{G2}, inequality~\eqref{2.17} then implies that
\begin{equation}\label{2.18}
|\xi-\theta_1|\leq C(\alpha)\frac{1}{\alpha^{n+k}}\leq C(\alpha)e^{-(n+k)\log \alpha}.
\end{equation}
For sufficiently large $n$, inequality~\eqref{2.18} implies that $h(\theta_1)>0$; indeed, if $h(\theta_1)=0$, then $\theta_1$ belongs to a fixed finite set of real algebraic numbers in $\Q(\alpha)$ of height zero, whereas the right-hand side of~\eqref{2.18} tends to zero and $\xi\notin\Q(\alpha)$. By the assumption that $\xi\notin\Q(\alpha)$ and $\xi$ is not Liouville over $\Q(\alpha)$, it follows that there exists $\tau=\tau(\xi)<\infty$ such that
\[|\xi-\theta_1|>\exp(-\tau h(\theta_1)).\]

By inequality~\eqref{2.12}, it follows that
\begin{equation}\label{2.19}
|\xi-\theta_1|>C'(\xi,\alpha)\exp\bigg(-\tau\bigg(\frac{1}{2}(d^2+d)\log\alpha+h(\alpha)\bigg ) n\bigg).
\end{equation}
Combining inequality~\eqref{2.18} with inequality~\eqref{2.19} yields
\[k<\bigg(\tau\bigg(\frac{d^2+d}{2}+\frac{h(\alpha)}{\log\alpha}\bigg)-1\bigg)n+C''(\xi,\alpha).\]
Enlarging \(\tau\) if necessary, we may assume that the coefficient of \(n\) is positive.
Now the theorem follows from Lemma \ref{lem-3}.
\end{proof}

Finally, we use a similar strategy to prove Theorem \ref{thm-3}.

\begin{proof}[Proof of Theorem \ref{thm-3}]
The proof is similar to that of Theorem~\ref{thm-2}, and we shall follow the same notations. By the assumption, there exists a conjugate of $\alpha$ lying outside the unit circle. Without loss of generality, say $|\alpha_2|>1$.\par
Similar to the proof of Theorem~\ref{thm-2}, Lemma \ref{lem-2} implies that
\begin{equation}\label{2.20}
\theta_2\alpha_2^{n}=\frac{G_1(\alpha_2)}{G_0(\alpha_2)},
\end{equation}
\begin{equation}\label{2.21}
\theta_2\alpha_2^{n+k-d+2}=-\frac{G_2(\alpha_2)}{G_0(\alpha_2)}.
\end{equation}
Recall that $G_0(\alpha)=\sum_{m=1}^{d}ma_m \alpha^{m-1}$ is a non-zero constant only depending on $\alpha$. Recall also that $G_1(x)=\sum_{i=0}^{d-1}\sum_{m=i+1}^{d}a_mx^{m-i-1}A_{n+i}$ is a polynomial with integral coefficients of degree at most $d-1$. For all sufficiently large $n$, the integers $A_n,A_{n+1},\ldots,A_{n+d-1}$ are not all zero. Hence $G_1(x)$ is not the zero polynomial, and since the minimal polynomial of $\alpha_2$ has degree $d$, we have $G_1(\alpha_2)\neq 0$. The finitely many smaller values of $n$ can be absorbed into the final constant.

As before, $|A_{n+i}|\leq |\xi\alpha^{n+i}|+2\leq |\xi|\alpha^{n+d-1}+2$ for $0\leq i\leq d-1$. It follows that 
\begin{equation}\label{G1}
\overline{G_1}\leq d\,\overline{f}\bigl(|\xi|\alpha^{n+d-1}+2\bigr).    
\end{equation}
Applying Lemma~\ref{lem:Garsia} with \(p(x)=G_1(x)\) and
\(\lambda=\alpha_2\), and using \(\deg G_1\le d-1\), we obtain
\begin{equation}\label{2.22}
        |G_1(\alpha_2)|
        \ge
        \overline{G_1}^{\,1-d}d^{1-d}M(\alpha_2)^{-(d-1)}
        \ge C_1(\xi,\alpha_2)\alpha^{-n(d-1)}.
\end{equation}

From equation~\eqref{2.20} and inequality~\eqref{2.22}, we have the lower bound
\begin{equation}\label{2.23}
\mid\theta_2\mid \geq C_2(\alpha_2)|\alpha_2|^{-n}|G_1(\alpha_2)|\geq C_3(\xi,\alpha_2)|\alpha_2|^{-n}\alpha^{-n(d-1)}.
\end{equation}
On the other hand, from equation~\eqref{2.21}, the observation that $|\varepsilon_{n+k-d+2+s}|\leq 2$, and the definition of $G_2(\alpha_2)$ in equation~\eqref{G2}, we have the upper bound
\begin{equation}\label{2.24}
\mid\theta_2\mid\leq C_4(\alpha_2)|\alpha_2|^{-n-k+d-2}.
\end{equation}
Combining inequalities~\eqref{2.23} with \eqref{2.24}, we obtain that
\begin{equation}\label{kbound}
k<\frac{(d-1)\log \alpha}{\log |\alpha_2|}n+\frac{\log C_5(\xi,\alpha_2)}{\log |\alpha_2|}.    
\end{equation}
Since $d>1$, inequality~\eqref{Calphaxi} follows from Lemma~\ref{lem-3}. 

Next, assume additionally that $1 \leq \xi\leq \alpha$. We aim to show that the constant $C$ on the right-hand side of inequality~\eqref{Calphaxi} does not depend on $\xi$. Given Lemma~\ref{lem-3} and inequality~\eqref{kbound}, it suffices to show that the constant $C_5$ does not depend on $\xi$. To achieve that, we perform the same analysis more carefully. Since $1\leq \xi \leq \alpha$, inequality~\eqref{G1} implies that the constant $C_1$ in inequality~\eqref{2.22} does not depend on $\xi$, and thus the constant $C_3$ in inequality~\eqref{2.23} does not depend on $\xi$. Now it is clear that the constant $C_5$ in inequality~\eqref{kbound} does not depend on $\xi$.
\end{proof}

\section{Applications}\label{sec:app}
In this section, we present an application of our results to Fourier transforms of some self-similar measures. In order to state the results, we first introduce some notations.

Recall that an iterated function system (IFS) \(\!\mathscr{I}=\!\left\{f_1,f_2,\ldots,f_m\right\},m\geq2\) is a finite family of strict contraction mappings on $\mathbb{R}$. Let $P=(p_1, p_2,\ldots, p_m)$ be a non-degenerate probability vector, that is, $\sum_{i=1}^{m}p_i=1$ and $0< p_i<1\mbox{ for all } 1\leq i\leq m$. It is well-known that there is a unique nonempty compact set $K\subset \mathbb{R} $ such that
 \[K=\bigcup_{i=1}^{m}f_{i}(K)\]
and a unique Borel probability measure $\mu$ on $K$ satisfying
$$\mu=\sum_{i=1}^{m} p_{i}f_{i}\mu,$$
where $f\mu:=\mu\circ f^{-1}$ is the push-forward of $\mu$ by the transformation $f$; see for example \cite{H81}. We say that $K$ is a self-similar set and $\mu$ is the self-similar measure on $K$ associated with the probability vector $P$.\par
Suppose that we have the IFS which has the following form
\[f_i(x)=r^{l_i}x+a_i, 1\leq i \leq m,\]
where $0<r<1$, each $l_i$ is a positive integer with $\gcd(l_1,l_2\ldots,l_m)=1$, and $a_i\in\mathbb{R}$. By \cite[Lemma 4.1]{VY20}, without loss of generality, we assume that $l_1=l_2$ in the following discussion. We can also assume that $a_1>a_2$.

Recall that the Fourier transform of a measure $\mu$ is
\[\widehat{\mu}(u):=\int_{\R} \exp(2\pi i u t)\,d\mu(t), \quad\forall u \in \R.\] \\
The following theorem is due to Varj\'{u}-Yu \cite[Theorem 1.5]{VY20}:
\begin{thm}[Varj\'{u}-Yu]\label{thm-4}
Let $\mu$ be the non-atomic self-similar measure corresponding to the IFS as above.  Then
$$|\widehat{\mu}(u)|\leq\exp\Big(-C^{-1}\sum_{n>C}\|(a_1-a_2)u r^{n}\|^2\Big)$$ where $C>0$ is a constant depending only on $l_1, l_2,\ldots, l_m$ and $p_1, p_2,\ldots, p_m$.
\end{thm}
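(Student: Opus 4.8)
The plan is to use the self-similarity $\mu=\sum_ip_if_i\mu$ to recast the bound as a probabilistic statement about a renewal process, and then to exploit the reduction $l_1=l_2$ together with the aperiodicity coming from $\gcd(l_1,\ldots,l_m)=1$.

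\emph{Step 1: functional equation and a conditioning trick.} Since $f_i\mu$ is the push-forward of $\mu$ under $x\mapsto r^{l_i}x+a_i$, one has $\widehat{f_i\mu}(u)=e^{2\pi i a_iu}\widehat\mu(r^{l_i}u)$, so $\widehat\mu(u)=\sum_{i=1}^m p_i\,e^{2\pi i a_iu}\,\widehat\mu(r^{l_i}u)$; equivalently $\mu$ is the law of $X=\sum_{j\ge1}a_{I_j}r^{S_{j-1}}$, where $I_1,I_2,\ldots$ are i.i.d.\ with $\mathbb{P}(I_j=i)=p_i$, $S_0=0$, $S_j=l_{I_1}+\cdots+l_{I_j}$ (the series converges as $0<r<1$), so $\widehat\mu(u)=\mathbb{E}\bigl[e^{2\pi i uX}\bigr]$. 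I would then condition on the $\sigma$-algebra $\mathcal G$ generated by the indicators $(\mathbf 1\{I_j\in\{1,2\}\})_{j\ge1}$ and the symbols $I_j$ with $I_j\notin\{1,2\}$. The crucial consequence of $l_1=l_2$ is that every partial sum $S_k$ is $\mathcal G$-measurable, because each increment $l_{I_j}$ is (it equals the revealed value if $I_j\notin\{1,2\}$ and equals $l_1=l_2$ otherwise). Hence conditionally on $\mathcal G$ the quantity $X$ is a $\mathcal G$-measurable constant plus $\sum_{j:\,I_j\in\{1,2\}}a_{I_j}r^{S_{j-1}}$, a sum of independent two-valued terms, so
\[
\bigl|\mathbb{E}[e^{2\pi i uX}\mid\mathcal G]\bigr|\le\exp\Bigl(-c_0\!\!\sum_{j:\,I_j\in\{1,2\}}\!\!\bigl\|(a_1-a_2)ur^{S_{j-1}}\bigr\|^2\Bigr),
\]
where $c_0=c_0(p_1,p_2)>0$; this uses the factorization into independent two-point factors and the elementary inequality $|q_1+q_2e^{2\pi i\theta}|\le\exp(-c_0\|\theta\|^2)$ for a probability vector $(q_1,q_2)$ with $q_\ell>0$ (which follows from $|\sin\pi\theta|\ge2\|\theta\|$). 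Taking expectations and using $|\widehat\mu(u)|\le\mathbb{E}\bigl|\mathbb{E}[e^{2\pi i uX}\mid\mathcal G]\bigr|$, and writing $w(n):=\|(a_1-a_2)ur^{n}\|^2\in[0,\tfrac14]$, gives
\[
|\widehat\mu(u)|\le\mathbb{E}\exp\Bigl(-c_0\sum_{n\in T}w(n)\Bigr),\qquad T:=\{S_{j-1}:I_j\in\{1,2\}\},
\]
the (random) set of renewal epochs of $(S_j)$ immediately followed by a symbol in $\{1,2\}$.

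\emph{Step 2: a renewal/mixing estimate.} It suffices to prove $\mathbb{E}\exp(-c_0\sum_{n\in T}w(n))\le\exp(-c_1\sum_{n\ge n_0}w(n))$ for some $c_1>0$ and $n_0\in\mathbb{N}$ depending only on the $l_i,p_i$; absorbing $c_1^{-1}$ and $n_0$ into a single constant $C=C(l_i,p_i)$ and noting $\sum_{n>C}w(n)\le\sum_{n\ge n_0}w(n)$ then finishes the proof. Since $\gcd(l_1,\ldots,l_m)=1$, the renewal process $(S_j)$ is aperiodic, so by the renewal theorem $\mathbb{P}(n\in\{S_j\}_{j\ge0})\ge\rho_0>0$ for all $n\ge n_0$, and as the next symbol at a renewal epoch is drawn afresh, $\mathbb{P}(n\in T)\ge\rho:=\rho_0(p_1+p_2)>0$ for $n\ge n_0$. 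This pointwise bound does not suffice by itself, because $\mathbb{E}\exp(-c_0\sum_n\mathbf 1\{n\in T\}w(n))$ need not be small when the indicators $\mathbf 1\{n\in T\}$ are strongly correlated. To control the correlations I would introduce the residual-lifetime chain $R_n=\min\{S_j-n:S_j\ge n\}$ augmented by the mark $\mathbf 1\{I_j\in\{1,2\}\}$ of the inter-renewal block containing $n$: this is a finite, irreducible, aperiodic Markov chain $(\widetilde R_n)$ with $\{n\in T\}=\{\widetilde R_n=s_\star\}$ for a fixed state $s_\star$. Using $e^{-c_0w}\le1-c_2w$ on $[0,\tfrac14]$, one expands $\mathbb{E}\prod_{n}(1-c_2w(n)\mathbf 1\{\widetilde R_n=s_\star\})$ as a product of substochastic matrices (transition matrices of $(\widetilde R_n)$ twisted by the weights), and the Doeblin/spectral-gap property of $(\widetilde R_n)$ then extracts a factor $1-c_3w(n)$ per step (after grouping a bounded number of steps), giving the product bound $\exp(-c_3\sum_{n\ge n_0}w(n))$; equivalently one can argue via the i.i.d.\ cycle decomposition of $(S_j)$ and a Chernoff-type estimate over disjoint blocks of scales.

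I expect Step 2 to be the main obstacle and the technical heart: converting $\mathbb{P}(n\in T)\ge\rho$ into an exponential bound genuinely requires a mixing/large-deviations input, since otherwise highly correlated indicators could keep the expectation bounded away from $0$. The remaining ingredients — convergence of the series for $X$ and the bound $|\widehat\mu|\le\mathbb{E}|\mathbb{E}[\,\cdot\mid\mathcal G]|$, the trigonometric inequality $|q_1+q_2e^{2\pi i\theta}|\le\exp(-c_0\|\theta\|^2)$, and $e^{-c_0w}\le1-c_2w$ on $[0,\tfrac14]$ — are routine. (The reduction $a_1>a_2$ makes the statement non-trivial, and non-atomicity of $\mu$ merely confirms we are in this regime.)
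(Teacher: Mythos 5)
This theorem is quoted, not proved, in the paper: the authors explicitly attribute it to Varj\'{u} and Yu \cite[Theorem 1.5]{VY20} and use it as a black box in the proof of Theorem~\ref{thm-5}. So there is no internal proof to compare against; what follows is an assessment of your argument on its own terms, with some comparison to what Varj\'{u}--Yu actually do.

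Your Step~1 is correct and, as far as I can tell, is essentially the mechanism in \cite{VY20}. The random-series representation $X=\sum_{j\geq 1}a_{I_j}r^{S_{j-1}}$ is the standard coding of a self-similar measure; the conditioning on $\mathcal G$ (indicators $\mathbf 1\{I_j\in\{1,2\}\}$ plus the non-$\{1,2\}$ symbols) exploits $l_1=l_2$ exactly so that all the exponents $S_{j-1}$ become $\mathcal G$-measurable, and the surviving randomness is a product of independent two-point characteristic functions. The elementary estimate $|q_1+q_2e^{2\pi i\theta}|^2=1-4q_1q_2\sin^2\pi\theta\leq\exp(-16q_1q_2\|\theta\|^2)$ then yields $|\widehat\mu(u)|\leq\mathbb E\exp\bigl(-c_0\sum_{n\in T}w(n)\bigr)$ with $w(n)=\|(a_1-a_2)ur^n\|^2$, which is exactly the right intermediate inequality.

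Your Step~2 is also sound in concept, and you are right to flag that the na\"ive pointwise bound $\mathbb P(n\in T)\geq\rho$ does not suffice. Two small comments on the details. First, your augmented chain is fine as stated: because $l_1=l_2$, the pair $(R_n,\,\mathbf 1\{M_n\in\{1,2\}\})$ (residual lifetime and binary mark) is genuinely Markov, and $\{n\in T\}=\{(R_n,b_n)=(l_1,1)\}=\{\widetilde R_n=s_\star\}$ is a single state, as you claim. Second, the Doeblin argument can be pushed through cleanly without any block grouping: writing $\mu_n$ for the (row) sub-probability vector after applying $D_{n_0}P\cdots D_{n-1}P$ and $y_n=\mu_n\mathbf 1$, one has $y_{n+1}=y_n-(1-e^{-c_0w(n)})(\mu_n)_{s_\star}$, and $M_j=D_jP\geq e^{-c_0/4}P$ entrywise together with $P^K\geq\epsilon$ entrywise give $(\mu_n)_{s_\star}\geq\epsilon e^{-Kc_0/4}y_{n-K}\geq\epsilon e^{-Kc_0/4}y_n$; hence $y_{n+1}\leq y_n\exp(-c_6w(n))$ for all $n\geq n_0+K$, which telescopes to the claimed bound with $C=\max(c_6^{-1},n_0+K)$. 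Irreducibility and aperiodicity of $\widetilde R_n$ on its reachable states follow from $p_i>0$ and $\gcd(l_1,\dots,l_m)=1$ just as you indicate. So the proposal is a correct reconstruction; the paper itself does not carry out any of this and simply cites \cite{VY20}.
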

As a corollary, they deduced that if $r^{-1}$ is an algebraic integer that is not a Pisot or Salem number, then $|\widehat{\mu}(u)|=O((\log |u|)^{-\gamma})$ as $|u|\to\infty$ for some constant $\gamma>0$ \cite[Corollary 1.6]{VY20}. They showed that this corollary follows from Theorem~\ref{thm-4} and \cite[Proposition 5.5]{BS14} by Bufetov and Solomyak.

We shall extend their result to the case that $r^{-1}$ is a general algebraic number in the following theorem, provided that $r^{-1}$ has at least one conjugate different from itself outside the unit circle. Note that when $r^{-1}$ is an algebraic integer, by definition, such a condition coincides with the assumption that $r^{-1}$ is not a Pisot or Salem number in their corollary. Thus, our theorem extends their result.

\begin{thm}\label{thm-5}
Let $\mu$ be the non-atomic self-similar measure corresponding to the IFS as above. Suppose $r^{-1}$ is an algebraic number which has at least one conjugate different from itself outside the unit circle. Then
$$|\widehat{\mu}(u)|=O((\log |u|)^{-\gamma})\quad (|u|\to\infty),$$ where $\gamma>0$ is a constant.
Furthermore, $\mu$-almost all $x$ is normal to any base $b\geq 2,b\in\mathbb{Z}$.
\end{thm}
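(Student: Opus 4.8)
The plan is to combine Theorem~\ref{thm-4} with Theorem~\ref{thm-3} in essentially the same way Varj\'u and Yu derive their Corollary~1.6. Set $\alpha = r^{-1} > 1$; by hypothesis $\alpha$ is algebraic with at least one conjugate different from itself outside the unit circle, so Theorem~\ref{thm-3} applies with parameter $\xi$ to be chosen and $\eta = 0$. Fix a large real number $u$ and write $\xi = (a_1 - a_2)|u|$; recall $a_1 > a_2$, so $\xi \neq 0$ (the case $a_1 = a_2$ can be excluded since then the measure would be supported on an arithmetic progression of scaled copies forcing atoms, contradicting non-atomicity, or handled by the reduction in \cite[Lemma 4.1]{VY20}). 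By Theorem~\ref{thm-4},
\[
|\widehat{\mu}(u)| \le \exp\Big(-C^{-1}\sum_{n > C}\|\xi r^n\|^2\Big) = \exp\Big(-C^{-1}\sum_{C < n \le M}\|\xi \alpha^{-n}\|^2\Big)
\]
for any truncation $M$; but this is the wrong direction — the Fourier bound involves $\xi\alpha^{-n}$, i.e.\ the \emph{contracting} direction $r^n \to 0$, not $\alpha^n$. So the first real step is to reconcile this: one must instead apply the estimate of Theorem~\ref{thm-3} (or rather its underlying mechanism) to the sequence $\|\xi r^n\|$ where $r<1$, OR — following Varj\'u--Yu — absorb the scaling into $\xi$. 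The correct route is the latter: for a given $u$, choose the integer $N = N(u)$ maximal with $\alpha^N \le |u|$, set $\xi_u = (a_1-a_2)|u|\alpha^{-N} \in [\,a_1-a_2,\ (a_1-a_2)\alpha\,]$, a bounded interval independent of $u$, and observe $\|\xi u r^{N+j}\| = \|\xi_u \alpha^{-j}\|$. Reindexing, $\sum_{n>C}\|(a_1-a_2)ur^n\|^2 \ge \sum_{1 \le j \le N - C}\|\xi_u \alpha^{j-N}\|^2$; after a change of variable $m = N - j$ this is $\sum_{C \le m \le N-1}\|\xi_u \alpha^{\,C + (N-1-m) - \cdots}\|$ — the bookkeeping here is the first place care is needed, but the upshot is that one obtains a sum of $\|\xi' \alpha^m\|^2$ over $\asymp N \asymp \log|u|$ consecutive values of $m$, with $\xi'$ ranging in a fixed compact set bounded away from $0$.

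The second step is to bound that sum below. Apply the second, $\xi$-uniform part of Theorem~\ref{thm-3}: since $\xi' \in [1,\alpha]$ after possibly rescaling by an absolute constant (absorb $a_1-a_2$, using positivity), and since $\alpha = r^{-1}$ has a conjugate off the unit circle, there is $\delta_4 \in (0,\tfrac12)$ and $C' = C'(\alpha) > 0$, both independent of $u$, with
\[
\#\{1 \le m \le M : \|\xi' \alpha^m\| \ge \delta_4\} \ge C' \log M
\]
for all $M$. Taking $M \asymp \log|u|$, at least $C'\log\log|u|$ of the relevant terms $\|\xi_u\alpha^m\|^2$ are $\ge \delta_4^2$, hence $\sum_{n>C}\|(a_1-a_2)ur^n\|^2 \ge \delta_4^2 C' \log\log|u| - O(1)$. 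Feeding this into Theorem~\ref{thm-4} gives $|\widehat\mu(u)| \le \exp(-C^{-1}\delta_4^2 C' \log\log|u| + O(1)) = O((\log|u|)^{-\gamma})$ with $\gamma = C^{-1}\delta_4^2 C' > 0$, which is the claimed decay.

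The final sentence — $\mu$-almost every $x$ is normal to every integer base $b \ge 2$ — follows from a standard Davenport--Erd\H{o}s--LeVeque type criterion: a measure whose Fourier transform tends to $0$ (even logarithmically) is a Rajchman measure, and by Lyons' theorem (or the classical argument via $\sum_N \tfrac{1}{N}|\widehat\mu(\text{something})|$, exploiting that $\sum_{k\le K} e(b^k x)$ has small $L^2(\mu)$ norm once $\widehat\mu(u) \to 0$) almost every point is normal to base $b$; here it is enough to quote that any Rajchman measure gives full measure to the set of numbers normal to base $b$ for every $b$, which is a known consequence of $\widehat\mu(u) \to 0$. I expect the main obstacle to be the index bookkeeping in the first step — making sure the $\asymp \log|u|$ consecutive exponents $m$ land in the range $[1, M]$ where Theorem~\ref{thm-3} speaks, and verifying that the rescaled $\xi_u$ genuinely lies in a fixed interval $[1,\alpha]$ (up to an absolute constant) so that the $\xi$-independent constant $\delta_4$ and $C'(\alpha)$ can be used uniformly over all $u$; everything after that is a direct substitution.
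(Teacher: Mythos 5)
Your plan is essentially the paper's proof: choose the exponent so that $(a_1-a_2)|u|$ (suitably normalized) is written as $\xi\alpha^k$ with $\xi$ in a fixed bounded interval, apply Theorem~\ref{thm-4}, and then use the $\xi$-uniform second part of Theorem~\ref{thm-3} to get $\asymp\log\log|u|$ terms bounded below by $\delta_4^2$; this yields the logarithmic Fourier decay, and the paper then concludes normality via Lyons' theorem. Your index bookkeeping is muddled in places but resolves to the same thing; the paper avoids the extra ``rescale by an absolute constant'' wrinkle by folding the factor $r^{C_0}$ into the normalization before choosing $k$, so that $\xi$ lands directly in $[1,\alpha)$, but this is cosmetic.

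One genuine slip: at the end you write that it ``is enough to quote that any Rajchman measure gives full measure to the set of numbers normal to base $b$ for every $b$, which is a known consequence of $\widehat\mu(u)\to 0$.'' This is not true. Mere decay of $\widehat\mu$ to zero does \emph{not} imply $\mu$-a.e.\ normality; the Davenport--Erd\H{o}s--LeVeque criterion and Lyons' refinement require a quantitative summability condition, namely (in the form used in the paper) $\sum_{n\ge 2}\widehat\mu(n)/(n\log n)<\infty$. The logarithmic decay rate you have just established is exactly what makes this sum converge, since $(\log n)^{-\gamma}/(n\log n)=n^{-1}(\log n)^{-(1+\gamma)}$ is summable. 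So the second conclusion does follow, but from the \emph{rate} of decay rather than from the bare Rajchman property, and you should invoke Lyons' Theorem~4 with that convergence hypothesis rather than asserting a false general principle.
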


\begin{proof}
Let $C$ be the constant from Theorem~\ref{thm-4} and set $C_0=\lfloor C \rfloor$. Let $\alpha=r^{-1}$ so that $\alpha>1$. Since $|\widehat\mu(-u)|=|\widehat\mu(u)|$, it suffices to consider positive $u$ large enough. Let $k$ be the unique integer such that 
$$\alpha^{k}\leq (a_1-a_2)u r^{C_0} <\alpha^{k+1};$$
then we can write
$$(a_1-a_2)u r^{C_0}=\xi \alpha^{k},$$ where $1\leq \xi <\alpha$. By Theorem \ref{thm-4}, 
$$|\widehat{\mu}(u)|\leq\exp\Big(-C^{-1}\sum_{n=C_0+1}^{\infty}\|\xi \alpha^{k-n+C_0}\|^2\Big)\leq\exp\Big(-C^{-1}\sum_{j=1}^{k-1}\|\xi \alpha^{j}\|^2\Big).$$
Note that $k \asymp \log u$, by the second part of Theorem \ref{thm-3}, 
$$
\sum_{j=1}^{k-1}\|\xi \alpha^{j}\|^2 \geq C_1(\alpha) \log (k-1) \cdot \delta(\alpha)^2  \geq C_2(\alpha)\log \log u.
$$
Therefore, we have
$$
|\widehat{\mu}(u)|\leq (\log u)^{-C^{-1}C_2(\alpha)},
$$
proving the first statement with $\gamma=C^{-1}C_2(\alpha)>0$.

The second statement is a consequence of the first statement and the celebrated Davenport-Erd\H{o}s-Leveque theorem \cite{DEL}. More precisely, the first statement implies that $$\sum_{n=2}^{\infty} \frac{|\widehat{\mu}(n)|}{n \log n} \ll \sum_{n=2}^{\infty} \frac{1}{n (\log n)^{1+\gamma}}<\infty,$$ thus the second statement follows from Lyons' theorem \cite[Theorem 4]{L86}. 
\end{proof}

\section*{Acknowledgements}
The first author thanks Kaneko Hajime, Malabika Pramanik, and Peter Varj\'{u} for helpful comments and suggestions. The second author thanks Greg Knapp
for helpful discussions. The first author is funded by a scholarship from the China Scholarship Council, Natural Science Foundation of Hubei Province No.2022CFB093, and NSFC grant No.12426661, 
and also enjoyed the hospitality of the University of British Columbia during which this work was carried out.

\end{document}